	%%%%%%%%%%%%%%%%%%          gtlatex.tem       %%%%%%%%%%%%%%%%%%
%
%  Template for articles written in LaTeX for publication in
%  G&T, G&TM and A&GT.  This template must be used with latex2e.  
%  If you use BiBTeX then you can collect the bibliography style 
%  file  gtart.bst  from the same directory as this file.  Full
%  instructions for using gtpart.cls are given in gtpartdoc.pdf.  
%
%
\documentclass{gtpart}     % Basic GT/GTM/AGT style
%
%   Uncomment one of the next three lines to obtain a full "mock-up"
%   of a published article:
%   A&GT:  \agtart     G&T:  \gtart   G&TM:  \gtmonart
%
%   NOTE:  Please do not place your article in a public place (eg
%          on the arXiv) in "mock-up" form unless it has been accepted
%          for publication in the relevant journal.
%
%\gtart  
%\agtart
%\gtmonart
%
%   Add necessary packages here.  Note that amsthm, amssymb and
%   amsmath are already loaded, so there is no need to add any 
%   of these.  Examples:
%
%\usepackage{pinlabel}  %%% the recommended graphics+labelling package
%\usepackage{graphicx}  %%% the recommended graphics package
%\usepackage[all]{xy}
%\usepackage{amscd}
%
%
%%% Start of metadata
%

\title{Centric linking systems of locally finite groups}

%  First author
%
\author{R\'emi Molinier}
\givenname{R\'emi}
\surname{Molinier}
\address{}
\email{molinier@ksu.edu}
\urladdr{}

%  Second author (uncomment if necessary)
%
%\author{}
%\givenname{}
%\surname{}
%\address{}
%\email{}
%\urladdr{}
%
%  (Add a similar block for other authors)
%
%   Title and author both have running head options:
%
%   \title[Running head title]{Main title}
%   \author[Running head author]{Author}
%
% give a separate \keyword and \subject line for each keyword/phrase or 
% subject class eg \keyword{framed link} \subject{primary}{msc2000}{57M25}

\keyword{}
\subject{primary}{msc2000}{}
%\subject{secondary}{msc2000}{}

%
%  fill in the reference and password if your article is stored at the
%  arXiv eg \arxivreference{math.GT/0512347}  \arxivpassword{5spud}

\arxivreference{}
\arxivpassword{}

%
%  Leave the following items blank
%
\volumenumber{}
\issuenumber{}
\publicationyear{}
\papernumber{}
\startpage{}
\endpage{}
\doi{}
\MR{}
\Zbl{}
\received{}
\revised{}
\accepted{}
\published{}
\publishedonline{}
\proposed{}
\seconded{}
\corresponding{}
\editor{}
\version{}

%%% End of metadata
%
%%% Start of user-defined macros %%%
%
%   Theorem-type environments.  There are two predefined styles :
%
%   \theoremstyle{plain} : for theorems, corollaries etc with heading 
%   bold and left justified, optional note bracketed in roman type
%   and statement in slanted type.  This is the default style.
%
%   \theoremstyle{definition} : (alias remark)  for definitions, remarks 
%   etc with heading bold and left justified, optional note as before but
%   with statement in roman type.
%   
%   Some sample  \newtheorem's  (delete these unless you need
%   them and insert your own):
%
\newtheorem{thm}{Theorem}[section]    % Standard theorem environment
\newtheorem{lem}[thm]{Lemma}          % Lemma environment with numbering
\newtheorem{prop}[thm]{Proposition}

\newtheorem{thm*}{Theorem}
\newtheorem{prop*}{Proposition}

%                                     % consecutive to theorems
%\newtheorem*{zlem}{Zorn's Lemma}      % A special unnumbered lemma.
%
\theoremstyle{definition}
\newtheorem{defi}[thm]{Definition}

\newtheorem{step}{Step}

% Definition environment with 
%                                     % numbering consecutive to theorems
             % Unnumbered environment for remarks.

%
%   Type your macros (\newcommand's etc) below.

\usepackage{fullpage}
\usepackage{enumerate}
\usepackage[all]{xy}

\numberwithin{equation}{subsection}

\newcommand{\F}{\mathbb{F}}
\newcommand{\Zb}{\mathbb{Z}}
\newcommand{\Zp}{\mathbb{Z}_{(p)}}

\newcommand{\Li}{\mathcal{L}}
\newcommand{\T}{\mathcal{T}}

\newcommand{\Cc}{\mathcal{C}}

\newcommand{\op}{\text{op}}

\newcommand{\Top}{\text{Top}}
\newcommand{\h}{\mathcal{H}}

\newcommand{\Or}{\mathcal{O}}

\newcommand{\pcompl}[1]{#1^{\wedge}_p}
\newcommand{\limproj}[1]{\lim\limits_{\substack{\longleftarrow \\ #1}}}
\DeclareMathOperator*{\hocolim}{hocolim}

\newcommand{\Ob}{\text{Ob}}
\newcommand{\Map}{\text{Map}}

\newcommand{\Aut}{\text{Aut}}

\newcommand{\Mor}{\text{Mor}}

\newcommand{\incl}{\text{incl}}

\newcommand{\Ker}{\text{Ker}\,}

\newcommand{\Syl}{\text{Syl}}

\newcommand{\Ab}{\text{$\mathcal{A}$b}}
%\newcommand{\dim}{\text{dim}}

%%% End of user-defined macros %%%

\begin{document}

\begin{abstract}    % type your abstract below
These notes are defining the notion of centric linking system for a locally finite group If a locally finite group $G$ has countable Sylow $p$-subgroups, we prove that, with a countable condition on the set of intersections, the $p$-completion of its classifying space is homotopy equivalent to the $p$-completion of the nerve of its centric linking system.
\end{abstract}

\maketitle

The notion of centric linking system of finite group was first introduced by Broto Levi and Oliver \cite{BLO1} to study the $p$-completion of classifying space of finite groups. It was the main tool in the proof of the Martino-Priddy Conjecture by Oliver \cite{O1,O2}. Later, Broto, Levi and Oliver define the notion of centric linking system associated to a saturated fusion system over a finite $p$-group \cite{BLO2} or a discrete $p$-toral group \cite{BLO3} to construct classifying spaces for fusion systems and develop the homotopy theory of fusion systems. In \cite{BLO3} they also generalize centric linking systems of finite groups to centric linking systems of locally finite groups with discrete $p$-toral Sylow $p$-subgroups. They also prove \cite[Theorem 8.7]{BLO3} an important property of centric linking systems : given a locally finite group $G$ with  discrete $p$-toral Sylow $p$-subgroups and satisfying some technical stabilization condition on centralizers, then the $p$-completion of the nerve of its centric linking systems has the homotopy type of the $p$-completion of the classifying space of $G$. 

On the other hand, Chermak and Gonzalez \cite{CG1}, using the language of localities are considering fusion systems over countable $p$-groups.This allows to consider fusion systems of a much more larger class of groups which contains in particular algebraic groups over the algebraic closure of $\F_p$. The groups they are considering are countable locally finite groups with a finite dimensionality condition on a certain poset of $p$-subgroups. This condition guarantee in particular the existence of Sylow $p$-subgroups and allow a study of the $p$-local structure of these groups.
%that they called lim-finite groups (see Definition \ref{d:lim-finite}).  This class is stable by subgroup, homomorphic image and extension and include, for example, algebraic group over the algebraic closure of $\F_p$. Thus it gives a nice class of groups to consider.

Here we generalize the notion of centric linking system to any locally finite groups. We are in particular interested in the case of localy finite group with countable Sylow $p$-subgroups. We prove in Theorem \ref{t:li=BG} that, for a locally finite group $G$ with countable Sylow $p$-subgroups, with a small countability condition on the poset of intersections of Sylows of $G$, the $p$-completion of the nerve of the centric linking system has the homotopy type of the classifying space of $G$.  This generalize the previous result of Broto, Levi and Oliver and can be the starting point of a homotopy theory of discrete localities developed in \cite{CG1}.
The surprising part of this result is that we get some information on these $p$-completions even if we do not know that the spaces we are considering are $p$-good. 

\section{Sylow $p$-subgroups}

In this paper, a \emph{$p$-group} is a locally finite group where every element of $P$ has finite order a power of $p$.

\begin{defi}
Let $G$ be a group let $S\leq G$ be a $p$-subgroup. We say that $S$ is a \emph{Sylow $p$-subgroup} of $G$ if

\begin{enumerate}[(i)]
\item $S$ is maximal in the poset of $p$-subgroups of $G$.
\item every $p$-subgroup of $G$ is conjugate to a subgroup of $S$; and
\end{enumerate}
We denote by $\Syl_p(G)$ the set of all Sylow $p$-subgroups of $G$.
\end{defi}

\begin{lem}
Let $G$ be a group with $\Syl_p(G)\neq \emptyset$.
\begin{enumerate}[(a)]
\item Any two elements in $\Syl_p(G)$ are conjugate.
\item Let $S$ be a $p$-subgroup of $G$ maximal in the poset of $p$-subgroups of $G$, then $S\in\Syl_p(G)$.
\end{enumerate}
\end{lem}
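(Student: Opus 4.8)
The plan is to derive both statements from two elementary observations: conjugation by any $g\in G$ is an automorphism of $G$, hence permutes the poset of $p$-subgroups and in particular carries maximal $p$-subgroups to maximal $p$-subgroups; and a maximal $p$-subgroup contained in some $p$-subgroup must coincide with it.

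For part (a), I would take $S,T\in\Syl_p(G)$ and apply condition (ii) for $S$ to the $p$-subgroup $T$: there is $g\in G$ with a conjugate ${}^{g}T$ of $T$ satisfying ${}^{g}T\leq S$. Since $T$ is maximal by (i), its conjugate ${}^{g}T$ is again maximal in the poset of $p$-subgroups; being contained in the $p$-subgroup $S$ then forces ${}^{g}T=S$. Hence $S$ and $T$ are conjugate.

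For part (b), the hypothesis $\Syl_p(G)\neq\emptyset$ is precisely what makes the statement available: I would fix some $T\in\Syl_p(G)$ and run the argument of (a) with the roles reversed. Applying (ii) for $T$ to the maximal $p$-subgroup $S$ yields a conjugate ${}^{g}S\leq T$, and maximality of ${}^{g}S$ forces ${}^{g}S=T$, so $S$ is conjugate to $T$. It then remains to check that any conjugate of a Sylow $p$-subgroup is again a Sylow $p$-subgroup, which follows immediately from the first observation: condition (i) is preserved because conjugation preserves maximality, and condition (ii) is preserved because for any $p$-subgroup $Q$, writing $Q$ as conjugate into $T$ and then conjugating back exhibits $Q$ as conjugate into $S$. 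Therefore $S\in\Syl_p(G)$.

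I do not expect a genuine obstacle in this argument; the only points requiring care are to record that being a Sylow $p$-subgroup is a conjugation-invariant property, so that being conjugate to an element of $\Syl_p(G)$ already suffices in (b), and to invoke the nonemptiness hypothesis in order to produce the reference Sylow $T$. Without that hypothesis a maximal $p$-subgroup need not satisfy the covering condition (ii), so it is essential to the conclusion of (b).
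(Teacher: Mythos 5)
Your proof is correct and follows essentially the same route as the paper: apply condition (ii) of a Sylow subgroup to conjugate a maximal $p$-subgroup into it, then use maximality (transported along conjugation) to force equality of the two. The only difference is cosmetic --- the paper phrases the maximality step as a contradiction via conjugating back, and leaves implicit the point you spell out explicitly, namely that being a Sylow $p$-subgroup is a conjugation-invariant property.
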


\begin{proof}
Let $S$ be a $p$-subgroup of $G$ maximal in poset of $p$-subgroups of $G$ and $S'\in\Syl_p(G)$. 
Since $S'$ is a Sylow $p$-subgroup of $G$, there is $g\in G$ such that $S^g\leq S'$. Assume that $S^g<S'$. Then $(S')^{g^{-1}}$ is a $p$-subgroup of $G$ which contains strictly $S$ and this contradicts the maximality of $S$. Thus $S^g=S'$ and this prove (a) and (b).
\end{proof}

For $G$ a group such that $\Syl_p(G)$ is non-empty, we denote by $\Omega_p(G)$ the collection of all subgroups of $G$ which are intersections of Sylow $p$-subgroups of $G$. Since $\Syl_p(G)$ is closed by conjugation in $G$, $\Omega_p(G)$ is also closed by conjugation in $G$. If $S\in\Syl_p(G)$ we will also define $\Omega_S(G)=S\cap\Omega_p(G)$ the collection of subgroups of $S$ which are in $\Omega_p(G)$.

\begin{defi}
Let $G$ be a group with $\Syl_p(G)\neq\emptyset$.
For $P$ a $p$-subgroup of $G$ we define $P^\circ\in\Omega_p(G)$ as the intersection of all Sylow $p$-subgroups of $G$ containing $P$.
\end{defi}

For $G$ a group and $P,Q$ two subgroups of $G$ we denote by $N_G(P,Q)$ the set of elements of $g\in G$ such that $P^g\leq Q$.
\begin{prop}\label{p:circ prop}
Let $G$ be a group with $\Syl_p(G)\neq\emptyset$ and $P, Q$ be two $p$-subgroups of $G$.
\begin{enumerate}[(a)]
\item $P\leq P^\circ$ and, if $P\leq Q$, then $P^\circ\leq Q^\circ$.
\item If $P\in\Omega_p(G)$, $P^\circ=P$.
\item $N_G(P,Q)\subseteq N_G(P^\circ,Q^\circ)$.
\item If $Q\in\Omega_p(G)$ then $N_G(P^\circ,Q)=N_G(P,Q)$.
\end{enumerate}
\end{prop}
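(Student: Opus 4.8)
The plan is to prove each part in order, using the characterization of $P^\circ$ as an intersection of Sylow subgroups and the fact that conjugation permutes $\Syl_p(G)$.

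For part (a), I would argue directly from the definition. Since $P^\circ$ is the intersection of all Sylow $p$-subgroups containing $P$, every such Sylow contains $P$, so $P\leq P^\circ$. For monotonicity, if $P\leq Q$ then every Sylow $p$-subgroup containing $Q$ also contains $P$; hence the family of Sylows defining $Q^\circ$ is a subfamily of those defining $P^\circ$, and intersecting over a larger family gives a smaller (or equal) subgroup, so $P^\circ\leq Q^\circ$. Part (b) is immediate: if $P\in\Omega_p(G)$ then $P=\bigcap_{i}S_i$ for some collection of Sylows $S_i$; each $S_i$ contains $P$, so it appears among the Sylows defining $P^\circ$, whence $P^\circ\leq\bigcap_i S_i=P$, and combined with (a) we get $P^\circ=P$.

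The crucial part is (c), and the key observation is that conjugation by $g\in G$ carries Sylow $p$-subgroups to Sylow $p$-subgroups bijectively, and so commutes with the formation of the $\circ$-closure. Concretely, if $g\in N_G(P,Q)$ then $P^g\leq Q$. I would first verify the identity $(P^\circ)^g=(P^g)^\circ$: the Sylows containing $P^g$ are exactly the conjugates $S^g$ where $S$ ranges over Sylows containing $P$ (using that $S\mapsto S^g$ is a bijection on $\Syl_p(G)$ preserving the containment $P\leq S \iff P^g\leq S^g$), so intersecting and conjugating give the same subgroup. Then from $P^g\leq Q$ and monotonicity (a) we get $(P^g)^\circ\leq Q^\circ$, i.e. $(P^\circ)^g\leq Q^\circ$, which says precisely $g\in N_G(P^\circ,Q^\circ)$. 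The main obstacle, and the step deserving care, is justifying that the $\circ$-operation is equivariant under conjugation; everything else follows formally once that identity is established.

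Finally, part (d) refines (c) in the case $Q\in\Omega_p(G)$. The inclusion $N_G(P^\circ,Q)\subseteq N_G(P,Q)$ is automatic since $P\leq P^\circ$ by (a). For the reverse inclusion, let $g\in N_G(P,Q)$, so $P^g\leq Q$. Since $Q\in\Omega_p(G)$ we have $Q^\circ=Q$ by (b), and by part (c) we obtain $g\in N_G(P^\circ,Q^\circ)=N_G(P^\circ,Q)$, giving $(P^\circ)^g\leq Q$. Thus $N_G(P,Q)\subseteq N_G(P^\circ,Q)$ and the two sets coincide. I expect no difficulty here beyond assembling parts (a)--(c), so the real content of the proposition lies in the equivariance identity $(P^\circ)^g=(P^g)^\circ$ used in (c).
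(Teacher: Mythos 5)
Your proof is correct and follows essentially the same route as the paper's (very terse) proof: parts (a) and (b) directly from the definitions, part (c) via the equivariance identity $(P^\circ)^g=(P^g)^\circ$ (which the paper calls ``a direct calculation'' and you rightly justify by noting that conjugation permutes $\Syl_p(G)$ preserving containments), and part (d) by assembling (a), (b), and (c). Your write-up is in fact more detailed than the paper's, but there is no difference in approach.
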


\begin{proof}
(a) follows from the definition of $(-)^\circ$.
(b) is a direct consequence of the definition of $\Omega_p(G)$.
To prove (c) let $g\in N_G(P,Q)$. By a direct calculation we have $(P^\circ)^g=(P^g)^\circ$. Thus, by (a),
\[(P^\circ)^g= (P^g)^\circ\leq Q^\circ\]
and $g\in N_G(^\circ,Q^\circ)$.
Finally, (d) follows from (a) and (c).
\end{proof} 

\section{Centric linking systems}
In this section, we will mostly work with locally finite groups even though some definitions make sens for any group or at least torsion groups.

For $G$ a locally finite group, we denote by $\T_p(G)$ the \textit{transporter system} of $G$, this is the category with set of objects the collection of $p$-subgroups of $G$ and for morphisms
\[\Mor_{\T_p(G)}(P,Q)=N_G(P,Q):=\left\{g\in G\mid P^g\leq Q\right\}.\]

\begin{defi}
Let $G$ be a locally finite group. A $p$-subgroup $P\leq G$ is \emph{p-centric} if $C_G(P)/Z(P)$ has no element of order prime to $p$.
We denote by $\Omega_p^c(G)\subseteq \Omega_p(G)$ the subposet of $G$ consisting of all subgroups in $\Omega_p(G)$ which are $p$-centric. Also $\T_p^c(G)\subseteq\T_p(G)$ will denote the full subcategory of $\T_p(G)$ with set of objects the collection of $p$-centric subgroups of $G$.
\end{defi}

For $G$ a locally finite group, we define $O^p(G) \unlhd G$ the subgroup of $G$ generated by all elements of order prime to $p$.  

\begin{lem}
Let $G$ be a locally finite group and $P$ a $p$-subgroup of $G$. The following are equivalent.
\begin{enumerate}[(i)]
\item $P$ is $p$-centric.
\item $C_G(P)=Z(P)\times O^p(C_G(P))$ and all elements of $O^p(G)$ have order prime to $p$.
\end{enumerate}
\end{lem}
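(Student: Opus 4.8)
Write $C=C_G(P)$ and $Z=Z(P)$, so that $Z=P\cap C$. The first thing I would record is that $Z$ lies in the center of $C$: any $c\in C$ centralises $P$ and hence centralises $Z\leq P$, so $Z\leq Z(C)$ is a central $p$-subgroup of $C$. In these terms the $p$-centric condition on $P$ says exactly that $C/Z$ contains no nontrivial element of $p$-power order, equivalently that $Z$ is a Sylow $p$-subgroup of $C$; this is the form in which I will use (i). The implication (ii)$\Rightarrow$(i) is then immediate: if $C=Z\times O^p(C)$ with all elements of $O^p(C)$ of order prime to $p$, then $C/Z\cong O^p(C)$ has no nontrivial $p$-element, so $P$ is $p$-centric.

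For (i)$\Rightarrow$(ii) the point is that $Z$ may be infinite, so I cannot simply quote a finite normal $p$-complement theorem for $C$; instead I would argue locally and pass to the colimit. Since $C$ is locally finite, write it as the directed union of its finite subgroups $F$. For each such $F$ the subgroup $F\cap Z$ is a central $p$-subgroup of $F$, and $F/(F\cap Z)\cong FZ/Z$ embeds in the $p'$-group $C/Z$; hence $F\cap Z$ is a central Sylow $p$-subgroup of $F$. Burnside's normal $p$-complement theorem then gives $F=(F\cap Z)\times O^p(F)$ with $O^p(F)$ a $p'$-group, and $O^p(F)$ is precisely the set of $p'$-elements of $F$.

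It remains to assemble these finite complements. Because $O^p$ is monotone on subgroups and the finite subgroups of $C$ are directed by inclusion, the union $\bigcup_F O^p(F)$ is a subgroup; it contains every $p'$-element of $C$ (each such element $x$ lies in $O^p(\langle x\rangle)$) and is contained in $O^p(C)$, so it equals $O^p(C)$. As a directed union of $p'$-groups it is itself a $p'$-group, which gives the second assertion of (ii) and forces $Z\cap O^p(C)=1$. Finally, for $c\in C$ the finite cyclic group $\langle c\rangle$ splits as $c=c_pc_{p'}$ with commuting $p$- and $p'$-parts; the image of $c_p$ in the $p'$-group $C/Z$ is trivial, so $c_p\in Z$, while $c_{p'}\in O^p(C)$. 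Thus $C=Z\cdot O^p(C)$, and since $Z$ is central and $O^p(C)$ is characteristic with $Z\cap O^p(C)=1$ we conclude $C=Z\times O^p(C)$.

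The main obstacle, and the only place where real care is needed, is exactly the possible infinitude of $Z$: the decomposition cannot be produced in one step for $C$, and one must verify that the finite-level complements $O^p(F)$ glue to a single $p'$-subgroup rather than merely to a subgroup generated by $p'$-elements. The key facts making the colimit argument go through are the idempotency and monotonicity of $O^p(-)$ together with the directedness of the finite subgroups of the locally finite group $C$.
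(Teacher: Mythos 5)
Your proof is correct and follows essentially the same route as the argument the paper delegates to \cite[Proposition 8.5]{BLO3}: for each finite $F\leq C_G(P)$ the intersection $F\cap Z(P)$ is a central Sylow $p$-subgroup, Burnside's normal $p$-complement theorem gives $F=(F\cap Z(P))\times O^p(F)$ with $O^p(F)$ a $p'$-group, and these complements assemble over the directed system of finite subgroups of the locally finite group $C_G(P)$. You also, rightly, read the statement the intended way rather than literally: ``$p$-centric'' must mean that $C_G(P)/Z(P)$ has no element of order $p$ (under the paper's literal ``no element of order prime to $p$'' the lemma is false, e.g.\ for $G=\mathbb{Z}/p\times\mathbb{Z}/p$ and $P$ one factor), and ``$O^p(G)$'' in (ii) must be $O^p(C_G(P))$, which is exactly what you proved.
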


\begin{proof} 
the proof is the same as in \cite[Proposition 8.5]{BLO3}.
\end{proof}

\begin{defi}
Let $G$ be a locally finite group. 
The \emph{centric linking system} of $G$ is the category $\Li_p^c(G)$ whose set of objects is the collection of all the $p$-centric subgroups of $G$, and where
\[
\Mor_{\Li_p^c(G)}(P,Q)=N_G(P,Q)/O^p(C_G(P)).
\]
If $S\in\Syl_p(G)$, the equivalent full subcategory $\Li_S^c(G)\subseteq \Li_p^c(G)$ with objects the subgroups of $S$ which are $p$-centrics is called the \emph{centric linking system of $G$ over $S$}.
\end{defi}

\begin{lem}\label{l:functor p-kernel}
Let	$\Psi:\Cc\rightarrow\Cc'$ be a functor between small categories.
Assume the following:
\begin{enumerate}[(i)]
\item $\Psi$ is bijective on isomorphism classes of objects and is surjective on morphism sets;
\item for each object $c\in \Cc$, the subgroup
\[K(c)=\Ker\left[\Aut_\Cc(c)\rightarrow\Aut_{\Cc'}(\Psi(c))\right]\]
is a $p$-group %(maybe not finite)
; and
\item for each pair of objects $c$ and $d$, and each $f,g:c\rightarrow d$ in $\Cc$, $\Psi(f)=\Psi(g)$ if and only if there is some $\sigma\in K(c)$ such that $g=f\circ\sigma$ (i.e. $\Mor_{\Cc'}(\Psi(c),\Psi(d))\cong\Mor_\Cc(c,d)/K(c)$).
Then for any functor $F:\Cc'\rightarrow \Top$, the induced map
\[\hocolim_{\Cc'}(F)\rightarrow\hocolim_{\Cc}(F\circ\Psi)\]
is an $\F_p$-homology equivalence, and hence induces a homotopy equivalence between the $p$-completions. %Also for any two functor $T:\Cc'^{op}
\end{enumerate}
\end{lem}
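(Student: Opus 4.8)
The plan is to compare the two homotopy colimits by means of the standard spectral sequence for the homology of a homotopy colimit. Recall that for a functor $F$ on a small category $\Cc$, the Bousfield--Kan spectral sequence has $E_2$-term given by the higher derived functors of the inverse limit, $E_2^{s,t} = \varprojlim^s_{\Cc}\, H_t(F)$, converging to $H_{s+t}(\hocolim_\Cc F)$. The functor $\Psi$ induces a map of spectral sequences from the one over $\Cc'$ to the one over $\Cc$, and the goal is to show this map is an isomorphism on $\F_p$-coefficients at the $E_2$-page, whence an $\F_p$-homology isomorphism after passing to the abutments.

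First I would set up the comparison at the level of the category algebra of the Grothendieck constructions, or equivalently work directly with the cochain complexes computing $\varprojlim^*$. The key point supplied by hypotheses (i) and (iii) is that $\Psi$ realizes $\Cc'$ as a quotient of $\Cc$: on objects it is a bijection up to isomorphism, and on morphisms $\Mor_{\Cc'}(\Psi(c),\Psi(d)) \cong \Mor_\Cc(c,d)/K(c)$, so the morphism sets of $\Cc$ are precisely those of $\Cc'$ inflated by the kernel groups. Because $F\circ\Psi$ is constant on the $K(c)$-orbits of morphisms and $F$ is defined on $\Cc'$, the relevant coefficient systems $H_t(F)$ and $H_t(F\circ\Psi)$ correspond under $\Psi$; the content is to compare the two $\varprojlim^*$ computations.

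The heart of the argument, and the step I expect to be the main obstacle, is showing that the higher limits agree. Here I would exploit hypothesis (ii): each $K(c)$ is a $p$-group. The map of $E_2$-terms factors through the comparison of $\varprojlim^*$ over $\Cc$ and over $\Cc'$, and the discrepancy is measured by the cohomology of the groups $K(c)$ with $\F_p$-coefficients acting on the relevant modules. Since each $K(c)$ is a $p$-group and we are working with $\F_p$-homology, a Shapiro-type or transfer argument forces the higher cohomology contributions of the $K(c)$ to vanish in the comparison after tensoring with $\F_p$; concretely, the action of the $p$-group $K(c)$ on $H_t(F(\Psi(c)); \F_p)$ via automorphisms lying in $\Ker[\Aut_\Cc(c)\to\Aut_{\Cc'}(\Psi(c))]$ is compatible with the coefficient system structure, and the invariants/coinvariants computation collapses to give an isomorphism. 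This is exactly the mechanism that underlies Lemma 1.3 of \cite{BLO1}, and the locally finite setting does not alter the formal structure since the statement is about small categories and an arbitrary functor $F$ to $\Top$.

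Having established the $E_2$-isomorphism, the conclusion follows by the comparison theorem for the map of spectral sequences: the induced map $\hocolim_{\Cc'}(F) \to \hocolim_\Cc(F\circ\Psi)$ is an $\F_p$-homology equivalence. Finally, since an $\F_p$-homology equivalence induces a homotopy equivalence after $p$-completion (by Bousfield--Kan, $p$-completion inverts $\F_p$-homology isomorphisms on the relevant class of spaces), we obtain the desired homotopy equivalence between the $p$-completions. I would remark that the cleanest route is in fact to cite the finite-categorical version already proved by Broto, Levi and Oliver and observe that their proof uses only the three enumerated hypotheses, not any finiteness of $\Cc$ itself, so it applies verbatim to our small categories.
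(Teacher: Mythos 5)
Your central step is the claim that, because each $K(c)$ is a $p$-group and we are working with $\F_p$-coefficients, ``a Shapiro-type or transfer argument'' forces the higher cohomology contributions of the $K(c)$ to vanish. That is backwards. Transfer kills group (co)homology in positive degrees precisely when the order of the group is invertible in the coefficients, i.e.\ for groups of order \emph{prime to} $p$; a nontrivial $p$-group acting trivially on an $\F_p$-vector space has nonzero cohomology in every degree, and the action of $K(c)$ on $H_t(F(\Psi(c));\F_p)$ \emph{is} trivial, since $F$ is defined on $\Cc'$ and $K(c)$ is by definition the kernel of $\Aut_\Cc(c)\rightarrow\Aut_{\Cc'}(\Psi(c))$. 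Shapiro's lemma gives vanishing only for induced modules, which these trivial coefficient modules are not. Indeed, taken literally, the statement you were asked to prove is false: let $\Cc$ have one object with automorphism group $\Zb/p$ and no other morphisms, let $\Cc'$ be the trivial category, $\Psi$ the unique functor, and $F$ the constant point functor. Hypotheses (i)--(iii) hold with $K(c)=\Zb/p$, yet the two homotopy colimits are $B\Zb/p$ and a point, whose $\F_p$-homologies differ. So no amount of spectral-sequence bookkeeping can close your argument as written; the flaw is in the arithmetic condition you lean on, not in the formal comparison.

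What saves the lemma is that ``$K(c)$ is a $p$-group'' in (ii) is a misprint: the hypothesis in \cite[Lemma 1.3]{BLO1} is that $K(c)$ is finite of order prime to $p$, and the intended generalization here is that $K(c)$ is locally finite with no element of order $p$. This is exactly what the application in the paper supplies, since the kernels of $\T_p^c(G)\rightarrow\Li_p^c(G)$ are the groups $O^p(C_G(P))$, all of whose elements have order prime to $p$ when $P$ is $p$-centric. With that reading, the paper's proof is essentially the citation you propose in your last sentence, but with the one check you omit made explicit: the only place the condition on $K(c)$ enters the argument of \cite{BLO1} is in knowing that taking $K(c)$-coinvariants is exact on $\Zp[K(c)]$-modules (equivalently $H_i(K(c);M)=0$ for $i>0$), and this persists for an infinite locally finite group without $p$-torsion, being the increasing union of finite subgroups of order prime to $p$. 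Note also that your closing ``applies verbatim'' remark mis-locates what must be verified: \cite[Lemma 1.3]{BLO1} is already stated for arbitrary small categories, so finiteness of $\Cc$ was never the issue; the new feature is that the kernels $K(c)$ may be infinite. (A smaller slip: the homology spectral sequence of a homotopy colimit has $E^2$ given by the derived functors of the colimit, not by $\varprojlim^s$.)
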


\begin{proof}
This is \cite[Lemma 1.3]{BLO1} except that we are just asking $K(c)$ to be a $p$-group instead of a finite $p$-group.   But this suffices to ensure that coinvariants preserve exact sequences of $\Zp[K(c)]$-modules, which is the only way the condition on $K(c)$ is used in the proof of \cite[Lemma 1.3]{BLO1}.
\end{proof}

In particular, when $G$ is locally finite, the canonical projection functor $\T^c_p(G)\rightarrow \Li_p^c(G)$ satisfies all of the hypotheses of Lemma \ref{l:functor p-kernel}, Hence, the induced map give an homotopy equivalence
\begin{equation}\label{T=L}
\pcompl{|\T_p^c(G)|}\cong\pcompl{|\Li_p^c(G)|}
\end{equation}
\section{Higher limits over orbit categories}

\begin{defi}
Let $G$ be a group and $\h$ a collection of subgroups of $G$. 
The \emph{orbit category} of $G$ over $\h$ is the category $\Or_\h(G)$ with set of objects $\h$ and morphisms
\[
\Mor_{\Or_\h(G)}(H,H')=H'\setminus N_G(H,H')\cong \Map_G(G/H,G/H').
\] 
When $1\in\h$, for $M\in\Zb[G]$-module, we define
\[
\Lambda_\h^*(G;M)=\limproj{\Or_\h(G)}{}^{\hspace{-0.6em}*}(F_M),
\]
where $F_M:\Or_\h(G)\rightarrow\Ab$ is the functor defined by setting $F_M(H)=0$ if $H\neq 1$ and $F_M(1)=M$.
\end{defi}

By Proposition \ref{p:circ prop}, if $G$ is a group with $\Syl_p(G)\neq\emptyset$ we have a functor $(-)^\circ:\Or_p(G)\rightarrow \Or_{\Omega_p(G)}(G)$ and we have the following adjunction.
\begin{lem}\label{l:adjunction}
Let $G$ be a group with $\Syl_p(G)\neq \emptyset$.
The two functors
\[\xymatrix{\Or_{\Omega_p(G)}(G)  \ar@/^/[rr]^-{\incl}& &\ar@/^/[ll]^-{(-)^\circ} \Or_p(G)}\]
are adjoint.
\end{lem}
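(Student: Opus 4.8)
The plan is to show that $(-)^\circ$ is left adjoint to $\incl$, by producing a natural bijection
\[
\Mor_{\Or_{\Omega_p(G)}(G)}(P^\circ,Q)\;\cong\;\Mor_{\Or_p(G)}(P,\incl Q)
\]
for every $p$-subgroup $P$ (an object of $\Or_p(G)$) and every $Q\in\Omega_p(G)$ (an object of $\Or_{\Omega_p(G)}(G)$). First I would unwind both sides using the definition of the orbit category: since $\incl$ is the identity on objects and on morphism representatives, the left-hand side is $Q\backslash N_G(P^\circ,Q)$ and the right-hand side is $Q\backslash N_G(P,Q)$.

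The core of the argument is then immediate from Proposition \ref{p:circ prop}(d): because $Q\in\Omega_p(G)$, we have $N_G(P^\circ,Q)=N_G(P,Q)$, so the two coset spaces are literally equal and the desired bijection is the identity map. Conceptually it is cleanest to phrase this as a reflection. Proposition \ref{p:circ prop}(b) gives $Q^\circ=Q$ for $Q\in\Omega_p(G)$, so the composite $(-)^\circ\circ\incl$ is the identity functor and the counit of the adjunction is the identity natural transformation; thus $\Or_{\Omega_p(G)}(G)$ sits inside $\Or_p(G)$ (via the full embedding $\incl$) as a reflective subcategory with reflector $(-)^\circ$. On the other side, the unit $\eta_P\colon P\to\incl(P^\circ)=P^\circ$ is the morphism represented by $1\in N_G(P,P^\circ)$, which makes sense since $P\leq P^\circ$ by Proposition \ref{p:circ prop}(a).

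To finish I would verify the universal property of the unit: for every morphism $f\colon P\to Q$ in $\Or_p(G)$ with $Q\in\Omega_p(G)$, represented by $g\in N_G(P,Q)$, there is a unique $\bar f\colon P^\circ\to Q$ in $\Or_{\Omega_p(G)}(G)$ with $\bar f\circ\eta_P=f$, namely the class of the same element $g$ now viewed in $N_G(P^\circ,Q)=N_G(P,Q)$. Compatibility of this assignment with composition — that $(-)^\circ$ is genuinely a functor and that the identification commutes with pre- and post-composition — follows from Proposition \ref{p:circ prop}(c), which guarantees $N_G(P,Q)\subseteq N_G(P^\circ,Q^\circ)$ so that the same group elements represent the corresponding morphisms. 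With the counit being the identity, the two triangle identities reduce to the observations that $(-)^\circ(\eta_P)=\id_{P^\circ}$ (again using $(P^\circ)^\circ=P^\circ$ and Proposition \ref{p:circ prop}(c)) and that $\eta_Q=\id_Q$ for $Q\in\Omega_p(G)$.

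I expect the only mildly delicate point to be bookkeeping with the one-sided coset conventions, i.e. checking that the pointwise equality $Q\backslash N_G(P^\circ,Q)=Q\backslash N_G(P,Q)$ is natural in both variables rather than merely a family of bijections. There is no geometric or homotopical input required here: once Proposition \ref{p:circ prop} is available, every step is a formal consequence of the behaviour of $(-)^\circ$ on transporter sets, and the adjunction essentially records that $P^\circ$ is the smallest element of $\Omega_p(G)$ through which all $\Or_p(G)$-morphisms from $P$ into $\Omega_p(G)$ factor.
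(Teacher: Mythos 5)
Your proposal is correct and takes essentially the same approach as the paper, whose entire proof is the remark that the lemma is a direct consequence of Proposition \ref{p:circ prop}. Your write-up simply fills in the details the paper leaves implicit --- the hom-set identification from part (d), naturality and functoriality from part (c), the unit from part (a), and the identity counit from part (b) --- and correctly identifies $(-)^\circ$ as the left adjoint (reflector) of the inclusion.
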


\begin{proof}
This is a direct consequence of Proposition \ref{p:circ prop}.
\end{proof}

For $G$ a group we denote by $S_p(G)$ the collection of all $p$-subgroups of $G$, $\Or_p(G)=\Or_{S_p(G)}(G)$ the associated orbit category of $G$ and, for $M$ a $\Zb[G]$-module, $\Lambda_p^*(G;M)=\Lambda_{S_p(G)}^*(G;M)$.

\begin{lem}[cf. {\cite[Lemma 5.10]{BLO3}}]\label{l:5.10}
Let $G$ be a group and $P$ be a $p$-subgroup of $G$.
Let $\Phi:\Or_p(G)^{\op}\rightarrow \Ab$ be a functor such that $\Phi(P)=0$ except when $P$ is $G$-conjugate to $Q$. set $\Phi':\Or_p(N_G(Q)/Q)\rightarrow \Ab$ to be the functor $\Phi'(P/Q)=\Phi(P)$. Then 
\[
\limproj{\Or_p(G)}{}^{\hspace{-0.5em}*}(\Phi)\cong\limproj{\Or_p(N_G(Q)/Q)}{}^{\hspace{-1.7em}*}(\Phi')\cong \Lambda_p^*(N_G(Q)/Q;\Phi(Q)).
\]
\end{lem}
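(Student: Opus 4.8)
The plan is to exploit the fact that $\Phi$ is concentrated on the single $G$-conjugacy class $[Q]$ in order to collapse the higher limit over $\Or_p(G)$ onto a higher limit over the automizer $\overline{N}=N_G(Q)/Q$, and then to recognize the transported functor as the one defining $\Lambda_p^*$. I would dispose of the second isomorphism first, since it is essentially formal. A $p$-subgroup of $\overline{N}$ has the form $P/Q$ with $Q\leq P\leq N_G(Q)$ and $P$ a $p$-subgroup; if $P/Q\neq 1$ then $Q<P$, and (since $Q$ is finite) no such $P$ can be $G$-conjugate to $Q$, so $\Phi'(P/Q)=\Phi(P)=0$. Hence $\Phi'$ is exactly the functor $F_M$ supported on the trivial subgroup with $M=\Phi'(1)=\Phi(Q)$, and the isomorphism $\limproj{\Or_p(\overline N)}{}^{*}(\Phi')\cong\Lambda_p^*(\overline N;\Phi(Q))$ is then immediate from the definition of $\Lambda_p^*$.

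For the first isomorphism — the real content — I would compare the (normalized) cochain complexes computing the two higher limits. For a functor $\Or_p(G)^{\op}\to\Ab$, the limit $\limproj{\Or_p(G)}{}^{*}$ is the cohomology of the complex whose degree-$n$ term is the product of $\Phi(P_0)$ over non-degenerate chains $P_0\to P_1\to\cdots\to P_n$. Because $\Phi$ vanishes off $[Q]$, only chains whose initial object lies in $[Q]$ contribute; and since all objects of $[Q]$ are isomorphic in $\Or_p(G)$ with $\Aut_{\Or_p(G)}(Q)=\overline N$, I would normalize every contributing chain so that $P_0=Q$. Such a chain based at $Q$,
\[
Q\to P_1\to\cdots\to P_n,
\]
in which (after using the $\overline N$-action on the basepoint) each $P_i$ may be taken to contain $Q$, should correspond to the chain $1\to P_1/Q\to\cdots\to P_n/Q$ in $\Or_p(\overline N)$, with matching coefficient $\Phi(Q)=\Phi'(1)$. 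I would then check that this assignment is an isomorphism of cochain complexes. The two points to verify are that the morphisms out of $Q$ in $\Or_p(G)$ whose targets contain $Q$ are exactly the images of the morphisms out of $1$ in $\Or_p(\overline N)$, and that the residual conjugation action of $\overline N$ used to fix the basepoint is free and records precisely the automorphisms appearing in the bar construction, so that the two differentials agree.

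The hard part will be this last piece of bookkeeping: making the correspondence between $[Q]$-based chains in $\Or_p(G)$ and chains in $\Or_p(\overline N)$ fully natural and compatible with all the simplicial face and degeneracy maps, while tracking the $\overline N$-conjugation that normalizes the basepoint. This is exactly the argument of \cite[Lemma 5.10]{BLO3} in the finite case, so the genuinely new issue is to confirm that it never uses finiteness of $G$ itself. It uses only that $Q$ is finite — so that $N_G(Q,Q)=N_G(Q)$, that $\End_{\Or_p(G)}(Q)=\Aut_{\Or_p(G)}(Q)=\overline N$, and that $Q$ admits no proper $p$-overgroup in $N_G(Q)$ that is $G$-conjugate to it (the condition already invoked for the second isomorphism) — together with the fact that $G$ is locally finite, which guarantees that $\overline N$ is a group for which $\Or_p(\overline N)$ and $\Lambda_p^*$ are defined. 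Granting these, the finite-group proof transports essentially verbatim.
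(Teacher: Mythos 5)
Your handling of the second isomorphism is essentially sound, but the first isomorphism --- which you rightly call the real content --- is not proved, and the plan you sketch for it breaks at a precise point. You claim that ``the morphisms out of $Q$ in $\Or_p(G)$ whose targets contain $Q$ are exactly the images of the morphisms out of $1$ in $\Or_p(\overline N)$.'' This is false whenever $P$ contains a $G$-conjugate of $Q$ that is not $P$-conjugate to $Q$, i.e.\ whenever there is fusion: $\Mor_{\Or_p(G)}(Q,P)=P\setminus N_G(Q,P)$, while the image of $\Mor_{\Or_p(\overline N)}(1,P/Q)$ consists only of the cosets $Pg$ with $g\in N_G(Q)$, and these exhaust $\Mor_{\Or_p(G)}(Q,P)$ exactly when every $G$-conjugate of $Q$ inside $P$ is already $P$-conjugate to $Q$. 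Concretely, take $G=S_4$, $p=2$, $P\in\Syl_2(S_4)$ and $Q=Z(P)=\langle(13)(24)\rangle$: then $N_G(Q)=P$, every $G$-conjugate of $(13)(24)$ lies in $P$, so $N_G(Q,P)=G$ and $\Mor_{\Or_2(G)}(Q,P)$ has three elements, of which only one comes from $\Or_2(\overline N)$. More globally, the $n$-chains in $\Or_p(G)$ starting in $[Q]$ run through arbitrary $p$-subgroups containing conjugates of $Q$, whereas the $n$-chains in $\Or_p(\overline N)$ starting at $1$ run only through overgroups of $Q$ inside $N_G(Q)$; there is no degreewise bijection, even after normalizing basepoints and accounting for the $\overline N$-action. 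The two cochain complexes agree only in cohomology, and proving that is the actual theorem: one must decompose each $\overline N$-set $\Mor_{\Or_p(G)}(Q,P)$ into orbits (whose stabilizers turn out to be $p$-subgroups of $\overline N$) and run the comparison/filtration argument of \cite[Proposition 5.3]{BLO3}, which goes back to Jackowski--McClure--Oliver in the finite case. This is exactly what the paper does: its entire proof of the lemma is a one-line application of \cite[Proposition 5.3]{BLO3} with $\Cc=\Or_p(G)$ and $\Gamma=N_G(Q)/Q$. So the ``bookkeeping'' you defer is precisely the cited proposition, not a routine verification, and your envisioned chain-level dictionary is not how that proposition is proved.

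Two smaller points. First, your argument that no $P>Q$ can be $G$-conjugate to $Q$ invokes ``since $Q$ is finite,'' but in this paper $p$-groups are only locally finite and may well be infinite (e.g.\ discrete $p$-toral groups), so finiteness of $Q$ is not available. What saves the step in the paper's setting is that $G$ is locally finite, hence a torsion group: if $Q^g<Q$ properly, then iterating gives $Q^{g^n}<Q$ for all $n\geq 1$, contradicting $Q^{g^n}=Q$ when $n$ is the (finite) order of $g$. For a completely arbitrary group, as in the literal wording of the lemma, this step genuinely fails: in $(\Zb/p)\wr\Zb$ the shift conjugates the $p$-subgroup $\bigoplus_{n\geq 0}\Zb/p$ onto a proper subgroup of itself. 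Granting that correction, your identification of $\Phi'$ with the functor $F_M$ concentrated at the trivial subgroup, $M=\Phi(Q)$, and hence the isomorphism with $\Lambda_p^*(N_G(Q)/Q;\Phi(Q))$, is indeed immediate from the paper's definition of $\Lambda_p^*$.
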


\begin{proof}
This is a direct application of \cite[Proposition 5.3]{BLO3} with $\Cc=\Or_p(G)$, $\Gamma=N_G(Q)/Q$ and $\h=S_p(G)$.
\end{proof}

\begin{lem}[cf. {\cite[Proposition 5.12]{BLO3}}]\label{l:5.12}
Let $G$ be a locally finite group. Assume there is a countable $p$-subgroup $S\leq G$ such that every $p$-subgroup of $G$ is conjugate to a subgroup of $S$. Fix a $\Zb[G]$-module $M$ and assume that there exist a finite subgroup $H\leq G$ such that $\Lambda_p^*(K;M)=0$ for all subgroup $K\leq G$ containing $H$. Then $\Lambda_p^*(G;M)=0$.
In particular, $\Lambda_p^*(G;M)=0$ if $M$ is a $\Zp[G]$-module and the kernel of the action of $G$ on $M$ contains an element of order $p$.
\end{lem}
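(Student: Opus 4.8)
The plan is to deduce the vanishing for $G$ from the assumed vanishing on finite subgroups by filtering $G$ and identifying $\Lambda_p^*(G;M)$ as an inverse limit of the finite data, in the spirit of the finite-group computations used in \cite{BLO3}. The countability of a Sylow $p$-subgroup $S$ is what makes this inverse limit tractable. Fix such an $S$ and write it as an increasing union $S=\bigcup_n S_n$ of finite $p$-subgroups with $\langle H,S_0\rangle$ finite; enlarge each $S_n$ to a finite subgroup $G_n\leq G$ containing $H$, set $G_\infty=\bigcup_n G_n$, and record that $G_\infty$ is countable. The first reduction I would make is $\Lambda_p^*(G;M)\cong\Lambda_p^*(G_\infty;M)$: the prime-to-$p$ directions of $G$ contribute nothing because, over $\Zp$, coinvariants under a $p'$-group are exact (the same mechanism invoked in the proof of Lemma~\ref{l:functor p-kernel}), so only the countable $p$-local data carried by $S$ and its normalizers survives.

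For the countable increasing union $G_\infty=\bigcup_n G_n$ I would compare $\Lambda_p^*(G_\infty;M)$ with the tower
\[\cdots\to\Lambda_p^*(G_2;M)\to\Lambda_p^*(G_1;M)\to\Lambda_p^*(G_0;M)\]
obtained by restriction along the inclusions $\Or_p(G_n)\to\Or_p(G_{n+1})$, the aim being a Milnor exact sequence
\[0\to{\varprojlim_n}^{1}\,\Lambda_p^{*-1}(G_n;M)\to\Lambda_p^*(G_\infty;M)\to\varprojlim_n\Lambda_p^*(G_n;M)\to 0 .\]
Once this is in place, the hypothesis $G_n\supseteq H$ forces every $\Lambda_p^*(G_n;M)=0$, so both outer terms vanish and $\Lambda_p^*(G_\infty;M)=0$, whence $\Lambda_p^*(G;M)=0$.

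The main obstacle is exactly the identification of $\Lambda_p^*(G_\infty;M)$ with this tower: a higher limit is computed from a cochain complex of products over chains of $p$-subgroups, products do not commute with the filtered colimit $G_\infty=\varinjlim_n G_n$, and, worse, even the countable group $G_\infty$ carries infinite $p$-subgroups (for instance a copy of $\Zb/p^\infty$ inside $S$) that lie in no $\Or_p(G_n)$. To circumvent this I would first apply the adjunction of Lemma~\ref{l:adjunction} to replace $\Or_p(G)$ by $\Or_{\Omega_p(G)}(G)$: since $F_M$ is concentrated at the trivial subgroup $1=1^\circ\in\Omega_p(G)$, the higher limits are computed over this smaller category, whose objects are all conjugate into the countable poset $\Omega_S(G)=S\cap\Omega_p(G)$. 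Countability of $S$ then supplies a countable cofinal filtration of $\Or_{\Omega_p(G)}(G)$ and pins down the indexing of the tower, while the remaining infinite objects are handled stratum by stratum using Lemma~\ref{l:5.10}, which reduces each conjugacy class of $p$-subgroups to a $\Lambda_p^*$ over a finite quotient. Making the higher limit over $G$ genuinely the inverse limit of the finite data, past these infinite $p$-subgroups, is the step where countability of $S$ is indispensable.

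Finally, for the ``in particular'' I would take $H=\langle z\rangle$, with $z$ an element of order $p$ in the kernel of the action of $G$ on $M$. For every \emph{finite} $K\supseteq\langle z\rangle$ the kernel of the $K$-action on $M$ has order divisible by $p$, so $\Lambda_p^*(K;M)=0$ by the finite-group computation of higher limits of atomic functors (cf.\ \cite{BLO3}). This finite vanishing is all the reduction of the main part ever consumes: feeding it in as the base case of the tower $\{G_n\}$ (each $G_n\supseteq\langle z\rangle$) yields $\Lambda_p^*(G;M)=0$ directly, with no circular appeal to the infinite case.
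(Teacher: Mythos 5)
Your proposal does not follow the paper's proof, which is essentially a citation: the argument of \cite[Proposition 5.12]{BLO3} is invoked verbatim, the only observation needed being that the single property of discrete $p$-toral groups used there --- that $S$ is an increasing union of finite subgroups --- also holds for a countable locally finite $p$-group. Measured on its own terms, your argument has genuine gaps at its two load-bearing reductions. The first is the isomorphism $\Lambda_p^*(G;M)\cong\Lambda_p^*(G_\infty;M)$. This is never proved, and the mechanism you cite (exactness of coinvariants by $p'$-groups over $\Zp$, as in Lemma \ref{l:functor p-kernel}) concerns the comparison of a category with a quotient of itself whose morphism sets are divided by $p'$-kernels; it says nothing about replacing $G$ by a subgroup. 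The categories $\Or_p(G)$ and $\Or_p(G_\infty)$ differ exactly in their conjugation data ($Q\backslash N_G(P,Q)$ versus $Q\backslash N_{G_\infty}(P,Q)$), which is precisely what $\Lambda_p^*$ is built to detect, so no such invariance holds as a general principle; nor can you appeal to the lemma being proved, since its hypotheses do not even pass to $G_\infty$ (a $p$-subgroup of $G_\infty$ is $G$-conjugate into $S$, but need not be $G_\infty$-conjugate into $S$). Note also that the statement allows $G$ to be uncountable, with only $S$ countable; the proof the paper relies on never replaces $G$ by a countable subgroup, so this reduction is not a step one can expect to be harmless.

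The second gap is the treatment of the infinite $p$-subgroups, which you rightly identify as the main obstacle but do not actually overcome. The claim ``$1=1^\circ\in\Omega_p(G)$'' is false in general: by definition $1^\circ$ is the intersection of \emph{all} Sylow $p$-subgroups of $G$, so $1\in\Omega_p(G)$ if and only if that intersection is trivial, and it is nontrivial precisely in instances of the ``in particular'' clause (for example when the order-$p$ element $z$ is central, since a central $p$-element lies in every Sylow $p$-subgroup); in that case $F_M$ restricts to zero on $\Or_{\Omega_p(G)}(G)$ and your appeal to Lemma \ref{l:adjunction} as stated collapses. Worse, even when $O_p(G)=1$, passing to $\Or_{\Omega_p(G)}(G)$ circumvents nothing: $\Omega_p(G)$ still contains infinite subgroups (every Sylow $p$-subgroup, for a start), and $\Omega_S(G)$ need not be countable --- a countable $S$ can have uncountably many subgroups, which is exactly why Theorem \ref{t:li=BG} must \emph{assume} countability of the conjugacy classes in $\Omega_p(G)\smallsetminus\Omega_p^c(G)$ as a separate hypothesis rather than deduce it from countability of $S$. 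Finally, handling ``the remaining infinite objects stratum by stratum using Lemma \ref{l:5.10}'' is vacuous: that lemma applies to functors concentrated on a single conjugacy class, and $F_M$ vanishes on every nontrivial class, so there is nothing to apply it to; the real difficulty --- that objects on which a functor vanishes still influence higher limits and can only be discarded under an upward-closure condition as in Lemma \ref{l:O5 1.6}, a condition the collection of finite subgroups does not satisfy --- is left untouched. The sound part of your plan is the middle step: over $\bigcup_n\Or_p(G_n)$, Lemma \ref{l:5.11} together with the hypothesis on $H$ forces all higher limits to vanish, and your deduction of the ``in particular'' clause from the finite-group computation is correct. But identifying that vanishing with $\Lambda_p^*(G;M)$, past the infinite $p$-subgroups and the full $G$-conjugation, is exactly the content of the lemma, and that is the part your argument does not supply.
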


\begin{proof}
The proof is exactly the same as the proof of \cite[Proposition 5.12]{BLO3}. 
Indeed, they prove the result for $S$ discrete $p$-toral group but the only property of discrete $p$-toral groups they used is that $S$ is a increasing union of finite groups, which is also true for countable locally finite groups.
\end{proof}

\begin{lem}[{\cite[Lemma 5.11]{BLO3}}]\label{l:5.11}
Let $\Cc$ be a small category and let $\Cc_1\subseteq \Cc_2\subseteq \cdots$ be an increasing sequence of subcategories of $\Cc$ whose union is $\Cc$. Let $F:\Cc^{\op}\rightarrow \Ab$ be a functor such that for each $k$,
\[\limproj{i}{}^{\!1} \left( \limproj{\Cc_i}{}^{\!k} (F|_{\Cc_i})\right)=0.\]
Then the restrictions induce an isomorphism
\[\limproj{\Cc}{}^{\!k}(F)\cong \limproj{i} \left( \limproj{\Cc_i}{}^{\!k} (F|_{\Cc_i}) \right).\]
\end{lem}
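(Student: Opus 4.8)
The plan is to reduce the statement to the Milnor $\limproj{i}{}^{\!1}$--exact sequence for a tower of cochain complexes. Recall that the higher limits of a functor $F\colon\Cc^{\op}\to\Ab$ are the cohomology groups of the standard cochain complex $C^*(\Cc;F)$ built from the nerve of $\Cc$, whose term in degree $n$ is the product
\[
C^n(\Cc;F)=\prod_{c_0\to c_1\to\cdots\to c_n} F(c_0)
\]
indexed by the chains of $n$ composable morphisms, with the usual alternating coface differential; thus $\limproj{\Cc}{}^{\!n}(F)=H^n(C^*(\Cc;F))$, and likewise $\limproj{\Cc_i}{}^{\!n}(F|_{\Cc_i})=H^n(C^*(\Cc_i;F|_{\Cc_i}))$. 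The inclusions $\Cc_i\hookrightarrow\Cc$ and $\Cc_i\hookrightarrow\Cc_{i+1}$ induce restriction maps of cochain complexes, which are simply the projections onto the sub-products indexed by chains lying in the smaller subcategory.

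First I would observe that these restrictions identify $C^*(\Cc;F)$ with the inverse limit of the tower $\{C^*(\Cc_i;F|_{\Cc_i})\}_i$. Indeed, any chain $c_0\to\cdots\to c_n$ of $\Cc$ involves only finitely many morphisms, each of which lies in some $\Cc_i$; since the $\Cc_i$ are increasing and exhaust $\Cc$, the whole chain lies in a single $\Cc_i$. Hence the indexing sets of the products form an increasing union whose total is the nerve of $\Cc$, so a compatible family over the tower is exactly an element of the product over all chains of $\Cc$, i.e. $C^n(\Cc;F)\cong\limproj{i}C^n(\Cc_i;F|_{\Cc_i})$ in each degree $n$. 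Moreover the transition maps of this tower are surjective, being projections, so the tower is Mittag--Leffler and $\limproj{i}{}^{\!1}C^n(\Cc_i;F|_{\Cc_i})=0$ for every $n$.

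Next I would feed this into the Milnor sequence. Because the transition maps are degreewise surjective, the sequence of complexes
\[
0\to\limproj{i}C^*(\Cc_i;F|_{\Cc_i})\to\prod_i C^*(\Cc_i;F|_{\Cc_i})\stackrel{1-\mathrm{sh}}{\longrightarrow}\prod_i C^*(\Cc_i;F|_{\Cc_i})\to 0
\]
is short exact, where $\mathrm{sh}$ is the shift induced by the transition maps. Taking the associated long exact cohomology sequence, using $H^n(\prod_i-)=\prod_i H^n(-)$ together with the identification of the left-hand complex from the previous step, yields for each $n$ the natural short exact sequence
\[
0\to\limproj{i}{}^{\!1}\left(\limproj{\Cc_i}{}^{\!n-1}(F|_{\Cc_i})\right)\to\limproj{\Cc}{}^{\!n}(F)\to\limproj{i}\left(\limproj{\Cc_i}{}^{\!n}(F|_{\Cc_i})\right)\to 0.
\]
The hypothesis, applied with $k=n-1$, says precisely that the left-hand term vanishes, so the restriction map on the right is an isomorphism, which is the claim.

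The only delicate point is the justification of the displayed short exact sequence of complexes, i.e. that $1-\mathrm{sh}$ is surjective on the product of the cochain complexes. This is exactly where the surjectivity of the transition maps, equivalently the Mittag--Leffler property established in the second step, is used; granting it, the rest is the routine manipulation of the resulting long exact cohomology sequence. In particular no finiteness of $\Cc$ or of $F$ is required, only that $\Cc$ is the increasing union of the $\Cc_i$.
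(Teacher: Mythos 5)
Your proof is correct, and it is essentially the argument behind the cited source: the paper itself states this lemma without proof, quoting it directly from \cite[Lemma 5.11]{BLO3}, whose proof is exactly this reduction to the Milnor $\lim^1$-sequence via the identification of the cochain complex $C^*(\Cc;F)$ with the inverse limit of the tower $\{C^*(\Cc_i;F|_{\Cc_i})\}_i$ of degreewise surjective restrictions. Nothing is missing; in particular you correctly isolate the two key points, namely that every finite chain of composable morphisms lies in some $\Cc_i$ and that surjectivity of the transition maps gives the degreewise Mittag--Leffler condition needed for the short exact sequence of complexes.
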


\begin{lem}\label{l:O5 1.6}
let $G$ be a group. let $\h\subseteq \h'$ be collections of $p$-subgroups of $G$ closed by conjugations such that
\[ \text{for all $P,Q\in\h$, if $P\in\h'$ and $P\leq Q$ then $Q\in\h'$.}\]
 Let $F:\Or_\h(G)^{\op}\rightarrow \Ab$ be a functor and denote by $F|_{\Or_{\h'}(G)}$ the restriction of $F$ to $\Or_{\h'}(G)$ 
If for all $P\in\h\smallsetminus\h'$, $F(P)=0$, then 
\[\limproj{\Or_\h(G)}{}^{\hspace{-0.5em}*}(F)=\limproj{\Or_{\h'}(G)}{}^{\hspace{-0.5em}*}(F|_{\Or_{\h'}(G)}).\]
\end{lem}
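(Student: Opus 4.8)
The plan is to compute both higher limits with the standard bar cochain complex and to show that these two complexes in fact coincide. Recall that for a small category $\Cc$ and a functor $F\colon\Cc^{\op}\to\Ab$, the higher limits $\limproj{\Cc}{}^{*}(F)$ are the cohomology groups of the cochain complex $C^*(\Cc;F)$ with
\[C^n(\Cc;F)=\prod_{c_0\to c_1\to\cdots\to c_n} F(c_0),\]
the product being taken over all chains of $n$ composable morphisms in $\Cc$ and each factor being the value of $F$ at the \emph{source} $c_0$; the coface maps are built from the functoriality of $F$ and, apart from the zeroth one, do not move this source. I would apply this with $\Cc=\Or_\h(G)$ and with $\Cc=\Or_{\h'}(G)$.

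The key observation concerns the morphisms of these orbit categories. A morphism $P\to Q$ in $\Or_\h(G)$ exists precisely when $\Mor_{\Or_\h(G)}(P,Q)=Q\setminus N_G(P,Q)$ is non-empty, that is, when $P^g\le Q$ for some $g\in G$. Suppose $P\in\h'$ and $P\to Q$ is a morphism of $\Or_\h(G)$, so $P^g\le Q$ for some $g$. Since $\h'$ is closed under conjugation, $P^g\in\h'$, and since $P^g\le Q$ with $P^g,Q\in\h$, the hypothesis on $\h'$ forces $Q\in\h'$. By induction on the length, any chain $c_0\to c_1\to\cdots\to c_n$ of $\Or_\h(G)$ whose source $c_0$ lies in $\h'$ has all of its objects in $\h'$, hence is a chain of $\Or_{\h'}(G)$; conversely every chain of $\Or_{\h'}(G)$ arises this way.

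With this in hand I would split the product defining $C^n(\Or_\h(G);F)$ according to whether the source $c_0$ lies in $\h'$ or in $\h\smallsetminus\h'$:
\[C^n(\Or_\h(G);F)=\Bigl(\prod_{\substack{c_0\to\cdots\to c_n\\ c_0\in\h'}} F(c_0)\Bigr)\ \oplus\ \Bigl(\prod_{\substack{c_0\to\cdots\to c_n\\ c_0\in\h\smallsetminus\h'}} F(c_0)\Bigr).\]
The second factor vanishes because $F(c_0)=0$ for every $c_0\in\h\smallsetminus\h'$ by hypothesis, while by the previous paragraph the first factor is exactly $C^n(\Or_{\h'}(G);F|_{\Or_{\h'}(G)})$. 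Because the value of each cochain sits at the source and the coface maps preserve the source (and, as noted, keep such chains inside $\h'$), this identification is compatible with the differentials, so the two cochain complexes are equal. Passing to cohomology then yields the desired equality of higher limits.

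The point that requires care — and the only real content of the argument — is the middle paragraph: one must use the \emph{source} convention for the bar complex, since subconjugacy runs downward while $\h'$ is assumed closed only upward, so the target convention would fail; combined with both the conjugation-invariance and the upward-closure of $\h'$, this guarantees that a chain cannot leave $\h'$ once its source lies there. Everything else is formal manipulation of the bar complex.
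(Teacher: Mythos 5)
Your proof is correct and matches the paper's: the paper's entire proof is a citation of [O5, Lemma 1.6(a)], and the proof of that lemma is precisely your bar-complex comparison --- chains of $\Or_\h(G)$ whose source lies in the conjugation-closed, upward-closed subcollection never leave it, so after discarding the factors killed by the vanishing of $F$ the two cochain complexes coincide, with compatible differentials. Note only that you have silently corrected the misprint in the statement: the inclusion must read $\h'\subseteq\h$ (as the paper's own applications of the lemma require), which is the reading your argument uses.
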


\begin{proof}
The proof is exactly the same as the proof of \cite[Lemma 1.6(a)]{O5}.
\end{proof}
\section{$p$-completion of classifying spaces}

\begin{thm}\label{t:li=BG}
Let $G$ be a locally finite group with $\Syl_p(G)\neq\emptyset$ and $S\in\Syl_p(G)$.
Assume that $S$ is countable and that $\Omega_p(G)\smallsetminus\Omega_p^c(G)$ contains countably many conjugacy classes of subgroups of $G$.
Then,
\[\pcompl{|\Li_S^c(G)|}\simeq \pcompl{|\Li_p^c(G)|} \simeq \pcompl{BG}.\]
\end{thm}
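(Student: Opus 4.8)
The plan is to establish the theorem in two stages. The first equivalence, $\pcompl{|\Li_S^c(G)|}\simeq \pcompl{|\Li_p^c(G)|}$, should be essentially formal: the inclusion $\Li_S^c(G)\subseteq\Li_p^c(G)$ is an equivalence of categories because every $p$-centric subgroup is $G$-conjugate to one inside $S$ (by the Sylow property of $S$ and the fact that $p$-centricity is conjugation-invariant), and an equivalence of categories induces a homotopy equivalence on nerves, hence on $p$-completions. So the real content is the second equivalence $\pcompl{|\Li_p^c(G)|}\simeq\pcompl{BG}$.

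For that, I would follow the strategy of \cite[Theorem 8.7]{BLO3}. Using the identification \eqref{T=L}, it suffices to prove $\pcompl{|\T_p^c(G)|}\simeq\pcompl{BG}$. The natural comparison map comes from the inclusion of the full transporter category on $p$-centric subgroups into the whole transporter category $\T_p(G)$, whose nerve is homotopy equivalent to $BG$ (since $\T_p(G)$ has the trivial subgroup as an initial-up-to-conjugacy object, or more precisely because $|\T_p(G)|\simeq BG$ via the $G$-action). To compare $|\T_p^c(G)|$ with $|\T_p(G)|$ after $p$-completion, I would pass to orbit categories and analyze the homotopy colimit decomposition: the nerve of a transporter category is the homotopy colimit over the corresponding orbit category of the functor sending $P$ to $BP$ (or the constant point-functor computing $EG\times_G(-)$). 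The obstruction to the comparison map being an $\F_p$-homology equivalence is measured by higher limits $\limproj{\Or_{\Omega_p(G)}(G)}{}^{*}$ of the relevant functors, and the key point is to show these higher limits vanish over the non-centric part.

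The central computation is therefore to show that the contribution of the subgroups in $\Omega_p(G)\smallsetminus\Omega_p^c(G)$ vanishes in the relevant higher limits. Here I would use the adjunction of Lemma \ref{l:adjunction} to replace $\Or_p(G)$ by $\Or_{\Omega_p(G)}(G)$, reducing everything to the poset of intersections of Sylow subgroups. Then, stratifying by $G$-conjugacy classes of non-centric subgroups in $\Omega_p(G)$ and using Lemma \ref{l:5.10} to identify each stratum's contribution with $\Lambda_p^*(N_G(Q)/Q;\Phi(Q))$, I would apply Lemma \ref{l:5.12}: for a non-$p$-centric subgroup $Q$, the group $C_G(Q)/Z(Q)$ contains an element of order $p$, which makes the kernel of the action on the relevant coefficient module contain an element of order $p$, forcing $\Lambda_p^*=0$. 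The countability hypotheses enter exactly here: countability of $S$ is needed so that each $N_G(Q)/Q$ has countable Sylow $p$-subgroups and Lemma \ref{l:5.12} applies, while the assumption that $\Omega_p(G)\smallsetminus\Omega_p^c(G)$ has only countably many conjugacy classes is what lets me assemble the individual stratum vanishings into a vanishing over the whole category, using the $\limproj{i}{}^{1}$-vanishing and colimit argument of Lemma \ref{l:5.11} applied to an increasing exhaustion of $\Or_{\Omega_p(G)}(G)$ by subcategories with finitely many conjugacy classes.

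\textbf{Main obstacle.} I expect the hardest part to be the convergence and interchange-of-limits argument controlling the $\limprojlim^1$ terms in Lemma \ref{l:5.11}: because $S$ is only countable (not discrete $p$-toral) and the number of non-centric classes is only countably infinite, I cannot rely on the Noetherian-type finiteness that makes the discrete $p$-toral case of \cite{BLO3} terminate after finitely many steps. I would need to choose the exhaustion $\Cc_1\subseteq\Cc_2\subseteq\cdots$ carefully so that each $\Cc_i$ contains only finitely many conjugacy classes of non-centric subgroups (possible precisely by the countability hypothesis), verify that on each finite stage the stratum-by-stratum vanishing from Lemma \ref{l:5.12} gives $\limproj{\Cc_i}{}^{k}=0$ in the relevant degrees, and then feed this into Lemma \ref{l:5.11} together with Lemma \ref{l:O5 1.6} to discard the non-centric objects. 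The delicate point — and where the claim that we get information without knowing the spaces are $p$-good becomes relevant — is ensuring the relevant $\F_p$-homology equivalence survives $p$-completion despite the colimit being genuinely infinite.
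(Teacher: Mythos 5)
Your proposal follows essentially the same route as the paper's proof: reduction via \eqref{T=L} and the homotopy colimit decomposition of \cite[Lemma 1.2]{BLO1}, the adjunction of Lemma \ref{l:adjunction} to pass to $\Or_{\Omega_p(G)}(G)$, vanishing of the non-centric strata via Lemmas \ref{l:5.10} and \ref{l:5.12} (using that $C_G(Q)/Z(Q)$ has an element of order $p$), and assembly through a filtration adding one non-centric conjugacy class at a time using Lemmas \ref{l:O5 1.6} and \ref{l:5.11} together with the Bousfield--Kan spectral sequence, with both countability hypotheses entering exactly where the paper uses them. The only cosmetic difference is that you identify $BG$ through $|\T_p(G)|$ and the trivial subgroup, whereas the paper identifies it as the homotopy colimit over $\Or_{\Omega_S(G)}(G)$, whose associated coset category has an initial object given by the intersection of all Sylow $p$-subgroups.
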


\begin{proof}
The proof is based on the proof of \cite[Theorem 8.7]{BLO3}. We will write $\Omega:=\Omega_S(G)$ and $\Omega^c:=\Omega_S^c(G)$ for short.
The first isomorphic holds since the categories $\Li_S^c(G)$ and $\Li_p^c(G)$ are equivalent. it remains to prove the last isomorphic.
\begin{step}
Let $\Psi$ and $\Phi$ be the following functors from $\Or_p(G)$ to spaces:
\[\Psi(P)=G/P \qquad \text{and}\qquad \Phi(P)=EG\times_G \Psi(P)\cong EG/P\cong BP.\]
Then, for any full subcategory $\Cc\subseteq\Or_p(G)$,
\[\hocolim_{\Cc}(\Psi)=\left.\left(\coprod_{n=0}^{\infty} \coprod_{G/P_0\rightarrow \cdots\rightarrow G/P_n} G/P_0\times \Delta^n\right) \middle/ \sim\right.\]
is the nerve of the category whose object are the cosets $gP$ for all $P\in\Ob(\Cc)$, and with a unique morphism $gP\rightarrow hQ$ exactly when $P^g\leq Q^h$. 
The category $\Or_{\Omega}(G)$ has an initial object (the intersection of all the Sylow $p$-subgroup of $G$). Thus $\hocolim_{\Or_\Omega(G)}(\Psi)$ is contractible. Since the Borel construction is an homotopy colimit, it commutes with other homotopy colimit and we have:
\[\hocolim_{\Or_\Omega(G)}(\Phi)\cong EG\times_G\left(\hocolim_{\Or_\Omega(G)}(\Psi)\right)\simeq BG.\] 
\end{step}
\begin{step}
For $Q\in\Omega\smallsetminus \Omega^c$ and $i\geq 0$, we define the functor $F_i^{[Q]}:\Or_p(G)^\op\rightarrow \Ab$ as follows
\[
F_i^{[Q]}(P)=
\left\lbrace
\begin{array}{ll}
H^i(BP,\F_p) & \text{if $P$ is $G$-conjugate to $Q$}\\ 
0 & \text{otherwise.}
\end{array}
\right.
\]
$C_G(Q)Q/Q\leq\Aut_{\Or_p(G)}(Q)=N_G(Q)/Q$ acts trivially on $F_i^{[Q]}(Q)$. Moreover, since $Q$ is not $p$-centric, $C_G(Q)Q/Q\cong C_G(Q)/Z(Q)$ contains an element of order $p$. Hence, by Lemma \ref{l:5.10} and Lemma \ref{l:5.12},
\[
\limproj{\Or_p(G)}{}^{\hspace{-0.5em}*}(F_i^{[Q]})\cong \Lambda^*\left(N_G(Q)/Q;F_i^{[Q]}\right)=0 \quad \text{for all }i,
\]
Therefore, by Lemma \ref{l:adjunction},
\begin{equation}\label{2}
\limproj{\Or_\Omega(G)}{}^{\hspace{-0.6em}*}(F_i^{[Q]})\cong \limproj{\Or_p(G)}{}^{\hspace{-0.6em}*}(F_i^{[Q]})=0
\end{equation}

\end{step}
\begin{step}
%Set $\Or_\Omega^c(G)$ be the full subcategory of $\Or_\Omega(G)$ with set of object the collection of all $p$-centric subgroups of $G$ which lies in $\Omega$. 
Let 
\[\Or_\Omega^c(G)=\Or_0\subseteq\Or_1\subseteq \cdots \subseteq \Or_\Omega(G)\]
be a sequence of full subcategories of $\Or_\Omega(G)$ such that $\bigcup_{r\geq 0} \Or=\Or_\Omega(G)$ and for all $r\geq 0$,  $\Ob(\Or_{r+1})\smallsetminus \Ob(\Or_{r})=\{Q_{r+1}\}^G$ is the $G$-conjugation class of a subgroup $Q_{r+1}\in\Omega$ and such that for all $P\in \Ob(\Or_r)$ and $P'\in\Omega$ with $P\leq P'\leq S$ then $P'\in \Ob(\Or_r)$.
For $r\geq 0$ define $F_{i,r}:\Or_{r+1}^\op\rightarrow \Ab$ define by 
\[F_{i,r}(P)=
\left\{
\begin{array}{ll}
F_i(P) & \text{if } P\in \Ob(\Or_r),\\
0 & else
\end{array}
\right.\]
By Lemma \ref{l:O5 1.6},

For all $r\geq 0$, 
\[\Ker\left[F_{i,r+1}|_{\Or_{r+1}}\twoheadrightarrow F_{i,r}\right]=F_i^{[Q_{r+1}]}|_{\Or_{r+1}}\]
and, by \eqref{2} and Lemma \ref{l:O5 1.6}, the higher limits of this functor vanish. Thus
\begin{equation}\label{3.1}
\limproj{\Or_{r+1}}{}^{\hspace{-0.4em}*}(F_{i,{r+1}}|_{\Or_{r+1}})\cong\limproj{\Or_{r+1}}{}^{\hspace{-0.4em}*}(F_{i,r})\cong\limproj{\Or_{r}}{}^{\hspace{-0.3em}*}(F_{i,r}|_{\Or_{r}})
\end{equation}
where the last isomorphisms follow by Lemma \ref{l:O5 1.6}.
Notice that for all $r\geq 0$, $F_{i,r}|_{\Or_{r}}=F_i|_{\Or_{r}}$ and that \eqref{3.1} implies that for all $r\geq 0$,
\[\limproj{\Or_{r}}{}^{\hspace{-0.2em}*}(F_{i,r}|_{\Or_{r}})\cong\limproj{\Or_{0}}{}^{\hspace{-0.2em}*}(F_{i}|_{\Or_{0}})= \limproj{\Or_\Omega^c(G)}{}^{\hspace{-0.6em}*}(F_{i}|_{\Or_\Omega^c(G)}).\]
We can then apply Lemma \ref{l:5.11} (the hypothesis on $\limproj{}{}^{\hspace{-0.1em}1}$ can be easily check by a direct calculation on the chain level) to get
\[\limproj{\Or_\Omega(G)}{}^{\hspace{-0.6em}*}(F_{i}|_{\Or_\Omega(G)})\cong \limproj{\Or_\Omega^c(G)}{}^{\hspace{-0.6em}*}(F_{i}|_{\Or_\Omega^c(G)})\]

The spectral sequence for cohomology of a homotopy colimit (\cite[XII.4.5]{BK}) now implies that the inclusion $\Or_\Omega^c(G)\subseteq\Or_\Omega(G)$ induces a mod $p$ homology isomorphism of homotopy colimits of $\Phi$ and hence a homotopy equivalence
\begin{equation}\label{3.2}
\pcompl{(\hocolim_{\Or_\Omega^c(G)}(\Phi))}\simeq 
\pcompl{(\hocolim_{\Or_\Omega(G)}(\Phi))}.
\end{equation}
Also, th adjunction of Lemma \ref{l:adjunction} restrict to an adjunction between $\Or_\Omega^c(G)$ and $\Or_p^c(G)$, and hence induces a homotopy equivalence
\begin{equation}\label{3.3}
\pcompl{(\hocolim_{\Or_\Omega^c(G)}(\Phi))}\simeq 
\pcompl{(\hocolim_{\Or_p^c(G)}(\Phi))}.
\end{equation}
\end{step}
\begin{step}
Now, by exactly the same argument as in \cite[Lemma 1.2]{BLO1} we have 
\begin{equation}\label{4.1}
\hocolim_{\Or_p^c(G)}(\Phi)\cong EG\times_G \left(\hocolim_{\Or_p^c(G)}(\Psi)\right) \cong |\T^c_p(G)|.
\end{equation}
\end{step}
Finally, by \eqref{2}, \eqref{3.1}, \eqref{3.2}, \eqref{3.3}, \eqref{4.1} and \eqref{T=L}
\[\pcompl{|\Li_p^c(G)|}\cong \pcompl{|\T_p^c(G)|}\cong \pcompl{BG}.\]
This ends the proof of Theorem \ref{t:li=BG}.
\end{proof}

\section{Particular cases}

Theorem \ref{t:li=BG} works for a very large class of groups. Here are some classical classes of groups which satisfy the hypothesis of Theorem \ref{t:li=BG}.

\begin{defi}
A \emph{discrete $p$-toral group} is a group $P$ with a normal subgroup $P_0\unlhd P$ such that
\begin{enumerate}[(a)]
\item $P$ is isomorphic to a finite product of copies of $\Z/p^\infty:=\bigcup_{n\geq 1} \Z/p\Z$; and
\item $P/P_0$ is a finite $p$-group.
\end{enumerate} 
\end{defi}

Theorem \ref{t:li=BG} give a generalization of the second part of \cite[Theorem 8.7]{BLO3} where they work with locally finite groups with disrete $p$-toral Sylow $p$-subgroups but with a condition of stabilization on centralizers. With Theorem \ref{t:li=BG}, we can also get rid of this condition on the centralizers if we require the group to be countable.  

\begin{lem}\label{l:discrete p-toral}
Let $G$ is a locally finite group.
Assume that
\begin{enumerate}[(a)]
\item each $p$-subgroups of $G$ is a discrete $p$-toral group; and
\item $G$ is countable.
\end{enumerate}
Then $G$ satisfies the hypotheses of Theorem \ref{t:li=BG}.
\end{lem}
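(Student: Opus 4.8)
The plan is to verify the two hypotheses of Theorem \ref{t:li=BG} separately: that $G$ admits a countable Sylow $p$-subgroup, and that $\Omega_p(G)\smallsetminus\Omega_p^c(G)$ contains only countably many $G$-conjugacy classes. First I would address the existence and countability of a Sylow $p$-subgroup. Since $G$ is countable, any $p$-subgroup is automatically countable, so the real content is the existence of a maximal $p$-subgroup, i.e. that $\Syl_p(G)\neq\emptyset$. Here I would use that $G$ is locally finite together with hypothesis (a): the poset of $p$-subgroups of $G$ is the union of the $p$-subgroups of its finite subgroups, and a direct Zorn's-lemma argument on chains of discrete $p$-toral subgroups produces a maximal element. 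The key point is that the union of an ascending chain of $p$-subgroups is again a $p$-subgroup, and that within the class of discrete $p$-toral groups such unions remain discrete $p$-toral (the rank of the maximal torus and the order of the component group are bounded along the chain), so the poset satisfies the chain condition needed to apply Zorn's lemma. This yields $S\in\Syl_p(G)$, and $S$ is countable by (b).

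Next I would bound the number of conjugacy classes in $\Omega_p(G)\smallsetminus\Omega_p^c(G)$. The cleanest route is to observe that every subgroup in $\Omega_p(G)$ is, up to $G$-conjugacy, a subgroup of the fixed countable $S$ by the defining property of Sylow subgroups. Thus it suffices to bound the number of subgroups of $S$, or at least the number of their $S$-conjugacy classes. Since $S$ is a countable discrete $p$-toral group, it has only countably many finite subgroups and countably many subgroups overall; in particular $S$ has at most countably many subgroups, so \emph{a fortiori} $\Omega_S(G)$, and therefore $\Omega_p(G)$ modulo $G$-conjugacy, is countable. Restricting to the non-$p$-centric ones can only decrease this count, so $\Omega_p(G)\smallsetminus\Omega_p^c(G)$ meets at most countably many $G$-conjugacy classes. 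Combining this with the countability of $S$ gives both hypotheses of Theorem \ref{t:li=BG}.

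I expect the main obstacle to be the existence half of the first step rather than any counting. Zorn's lemma only applies once one knows that every chain of $p$-subgroups has an upper bound that is itself a $p$-subgroup of the \emph{allowed} type; verifying that an ascending union of discrete $p$-toral subgroups of a countable locally finite group is again discrete $p$-toral (and not merely an arbitrary $p$-group) requires a short argument that the numerical invariants stabilize along the chain. Once maximality is secured, confirming that a maximal $p$-subgroup is actually a Sylow $p$-subgroup in the sense of the paper's definition follows from part (b) of the Lemma stated just after the Sylow definition, since local finiteness guarantees every $p$-subgroup is $G$-conjugate into a maximal one. The countability bounds are then routine consequences of $S$ being a countable group.
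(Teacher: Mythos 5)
Your proposal breaks at two points, one in each half of the argument.

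First, the Sylow existence step. Zorn's lemma does give maximal $p$-subgroups, and your worry about chains is actually the easy part: the union of an ascending chain of $p$-subgroups of $G$ is again a $p$-subgroup of $G$, hence automatically discrete $p$-toral by hypothesis (a) — no stabilization of numerical invariants is needed (and without (a) such stabilization would in general be false). The genuine problem is the conjugacy condition (ii) in the paper's definition of Sylow $p$-subgroup. You claim that ``local finiteness guarantees every $p$-subgroup is $G$-conjugate into a maximal one'' and invoke part (b) of the lemma following that definition; but that lemma has $\Syl_p(G)\neq\emptyset$ as a hypothesis, so using it to establish $\Syl_p(G)\neq\emptyset$ is circular. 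Moreover the claim itself is false: even a countable locally finite group can have non-conjugate maximal $p$-subgroups (the finitary symmetric group on a countable set is a standard counterexample). What makes conjugacy work here is precisely the combination of countability of $G$ with the minimal condition (artinian-ness) on $p$-subgroups supplied by the discrete $p$-toral hypothesis, as in Section 8 of \cite{BLO3}; it is not a formal consequence of local finiteness.

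Second, the counting step. Your key claim — that a countable discrete $p$-toral group has only countably many subgroups — is false as soon as the rank is at least $2$. For instance, $(\Zb/p^\infty)^2$ contains the pairwise distinct Pr\"ufer subgroups $H_\alpha=\left\{(x,\alpha x)\mid x\in\Zb/p^\infty\right\}$, one for each $p$-adic integer $\alpha$ (using the natural action of the $p$-adics on $\Zb/p^\infty$), an uncountable family. So you cannot bound $\Omega_S(G)$ by bounding all subgroups of $S$; your correct observation that $S$ has only countably many \emph{finite} subgroups does not suffice, since elements of $\Omega_S(G)$ need not be finite. The paper's proof avoids this entirely: by \cite[Proposition 1.2]{BLO3}, $S$ is artinian, so any intersection of $G$-conjugates of $S$ equals an intersection of finitely many of them; hence every element of $\Omega_S(G)$ has the form $S\cap S^{g_1}\cap\dots\cap S^{g_n}$, and countability of $G$ gives only countably many finite tuples $(g_1,\dots,g_n)$, whence $\Omega_S(G)$ is countable. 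This finite-intersection argument via the descending chain condition is the idea your proposal is missing.
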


\begin{proof} 
By \cite[Proposition 1.2]{BLO3} $S$ is artinian. In particular, $\Omega_S(G)$ is the set of intersection of $S$ and a finite collection of $G$-conjugates of $S$. Since $G$ is countable, $\Omega_S(G)$ is countable and $G$ satisfies the hypotheses of Theorem \ref{t:li=BG}.
\end{proof}

Moreover, Theorem \ref{t:li=BG} cover also countable locally finite groups which satisfies a condition of "finite dimensionality" which is central in \cite{CG1}. For $G$ a group and $H$ a subgroup of $G$ we denote by $\Omega^{\text{fin}}_H(G)$ the set of finite intersections of $G$-conjugate of $H$.

%\begin{defi}\label{d:lim-finite} For $G$ a group and $H$ a subgroup of $G$ we denote by $\Omega^{\text{fin}}_H(G)$ the set of finite intersections of $G$-conjugate of $H$.
%A group $G$ is \emph{lim-finite} if $G$ is locally finite and there are an increasing sequence $\{G_n\}_{n\geq 0}$ of finite subgroups of $G$ and a $p$-subgroup $S\leq G$ such that:
%\begin{enumerate}[(a)]
%\item $G=\bigcup_{n\geq 0} G_n$;
%\item $S$ is maximal in the poset of $p$-subgroups of $G$; and
%\item The supremum of the lengths of chains of proper inclusion in $\Omega^{\text{fin}}_S(G)$ exists and is finite.
%\end{enumerate}
%\end{defi}

\begin{lem}\label{l:lim-finite}
Let $G$ be a locally finite group.
Assume that
\begin{enumerate}[(a)]
\item $G$ is countable,
\item The supremum of the lengths of chains of proper inclusion in $\Omega^{\text{fin}}_S(G)$ exists and is finite.
\end{enumerate}
Then $S\in \Syl_p(G)$ and $G$ satisfies the hypotheses of Theorem \ref{t:li=BG}.
\end{lem}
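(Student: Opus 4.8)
The plan is to verify, in turn, the three requirements of Theorem~\ref{t:li=BG}: that $S$ is a Sylow $p$-subgroup, that $S$ is countable, and that $\Omega_p(G)\smallsetminus\Omega_p^c(G)$ meets only countably many $G$-conjugacy classes. Throughout I take $S$ to be a maximal $p$-subgroup of $G$; such subgroups exist by Zorn's lemma, since $G$ is locally finite and the union of a chain of $p$-subgroups is again a $p$-subgroup. The role of hypothesis~(b) is best recorded as a descending chain condition: because the chains of proper inclusions in $\Omega^{\text{fin}}_S(G)$ are bounded in length, any family of $G$-conjugates of $S$ has its (possibly infinite) intersection equal to the intersection of a finite subfamily. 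In particular $\Omega_S(G)\subseteq\Omega^{\text{fin}}_S(G)$.

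First I would prove $S\in\Syl_p(G)$. Since, once $\Syl_p(G)\neq\emptyset$, every maximal $p$-subgroup is Sylow (by the Lemma of Section~1), it suffices to show that every $p$-subgroup of $G$ is $G$-conjugate to a subgroup of $S$; as $G$ is locally finite it is enough to treat a \emph{finite} $p$-subgroup $P$, and then assemble the conjugacies for the finite subgroups of an arbitrary $p$-subgroup by a standard limit argument using the descending chain condition. For finite $P$ the orders $|P\cap S^g|$ are bounded by $|P|$, so one may choose $g\in G$ with $D:=P\cap S^g$ of maximal order, and the aim is to show $D=P$. Assuming $D\subsetneq P$, one runs the classical Sylow-intersection (Alperin-type) argument inside $N_G(D)$: both $N_P(D)$ and $N_{S^g}(D)$ strictly contain $D$, and conjugating one into the other produces a conjugate $S^{g'}$ with $|P\cap S^{g'}|>|D|$, contradicting maximality. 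The point where hypothesis~(b) is essential is exactly the termination of this argument in the locally finite setting: the finite-dimensionality supplies the induction (on the height of $D$ in the intersection poset) that guarantees a usable Sylow theory in the normalizers $N_G(D)$, in the spirit of the finite-dimensional localities of \cite{CG1}. This is the main obstacle.

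Granting $S\in\Syl_p(G)$, the remaining two points are routine. Since $S\leq G$ and $G$ is countable, $S$ is countable. For the conjugacy classes, each element of $\Omega^{\text{fin}}_S(G)$ is an intersection $S\cap S^{g_1}\cap\cdots\cap S^{g_n}$ indexed by a finite tuple $(g_1,\dots,g_n)$ of elements of the countable group $G$, so $\Omega^{\text{fin}}_S(G)$ is countable; by the descending chain condition above, $\Omega_S(G)\subseteq\Omega^{\text{fin}}_S(G)$ is therefore countable. Finally, any $P\in\Omega_p(G)$ is a $p$-subgroup, hence $P^g\leq S$ for some $g\in G$, and then $P^g\in S\cap\Omega_p(G)=\Omega_S(G)$ because $\Omega_p(G)$ is closed under conjugation; thus every $G$-conjugacy class in $\Omega_p(G)$, a fortiori in $\Omega_p(G)\smallsetminus\Omega_p^c(G)$, has a representative in the countable set $\Omega_S(G)$. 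Hence $G$ meets all the hypotheses of Theorem~\ref{t:li=BG}.
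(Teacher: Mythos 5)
The second half of your proposal (the countability checks) is correct, and is in fact a more careful write‑up of what the paper itself does: hypothesis (b) gives the descending chain condition, so every element of $\Omega_p(G)$ is a \emph{finite} intersection of conjugates of $S$ and hence lies in $\Omega^{\text{fin}}_S(G)$, which is countable because $G$ is; since every conjugacy class in $\Omega_p(G)\smallsetminus\Omega_p^c(G)$ meets $\Omega_S(G)\subseteq\Omega^{\text{fin}}_S(G)$, and $S\leq G$ is countable, the hypotheses of Theorem \ref{t:li=BG} follow. Your reading of $S$ (left undefined in the statement) as a Zorn-maximal $p$-subgroup is also the intended one.

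The genuine gap is exactly the step you flag as ``the main obstacle'': the proof that $S\in\Syl_p(G)$, i.e.\ that every $p$-subgroup of $G$ is conjugate into $S$. Your Alperin-type intersection argument is circular as sketched: having chosen $g$ with $D=P\cap S^g$ of maximal order and $D<P$, you need to conjugate $N_P(D)$ into $N_{S^g}(D)$ (or into a maximal $p$-subgroup of $N_G(D)$ containing it), and that requires Sylow-type conjugacy in the infinite locally finite group $N_G(D)$ --- an instance of the very statement being proved. The finite-group argument terminates because orders are finite; here your proposed ``induction on the height of $D$ in the intersection poset'' is not actually set up (it is not explained why the inductive hypothesis applies to $N_G(D)$, whose intersection poset is a priori unrelated to $\Omega^{\text{fin}}_S(G)$), and your ``standard limit argument'' passing from finite $p$-subgroups of an infinite $p$-subgroup $P$ to $P$ itself is likewise nontrivial, since the conjugating elements need not stabilize. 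The paper does not prove this step either: it invokes \cite[Proposition 3.8]{CG1}, applied to the locality $(G,\Delta,S)$ with $\Delta$ the collection of all subgroups of $S$, and hypothesis (b) is precisely the finite-dimensionality condition that makes the Chermak--Gonzalez result applicable. So either cite that proposition at this point, or carry out their induction in full; as written, the key assertion of the lemma is asserted rather than proved.
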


\begin{proof}
$S$ is a Sylow $p$-subgroup of $G$ by \cite[Proposition 3.8]{CG1} apply to the locality $(G,\Delta,S)$ for $\Delta$ the collection of all subgroup of $S$. By (b), it easy to see that $\Omega^{\text{fin}}_S(G)$ and then, since $G$ is coountable by (a), $\Omega^{\text{fin}}_S(G)$ is countable. Hence, $G$ satisfies the hypotheses of Theorem \ref{t:li=BG}.
\end{proof}

Gonzalez and Chermak proved, using the Chevalet commutator formula, that an algebraic group over the algebraic closure of $\F_p$ satisfies the hypotheses of \ref{l:lim-finite}. In particular, any algebraic group over the algebraic closure of $\F_p$ satisfies the hypotheses of Theorem \ref{t:li=BG}.
%%%%%%%%%%%%%%%%%%%%   Start of main body of article

%%%%%%%%%%%%%%%%%%%%   End of main body of article
%
%                             References
%
%   BiBTeX users uncomment the following line:

\bibliography{biblio}{}
\bibliographystyle{abstract}

%\bibliographystyle{gtart}
%%
%
%\begin{thebibliography}
%
%\end{thebibliography}

\end{document}